\newtheorem{thm}{Theorem}[section]
\newtheorem{cor}[thm]{Corollary}
\newtheorem{lem}[thm]{Lemma}
\newtheorem{prop}[thm]{Proposition}
\theoremstyle{definition}
\theoremstyle{remark}
\newtheorem{rem}[thm]{Remark}
\begin{document}

\title[Regularity results for $u_{11}u_{22} = 1$]{Regularity results for the equation $u_{11}u_{22} = 1$}
\author{Connor Mooney}
\author{Ovidiu Savin}
\address{Department of Mathematics, ETH Z\"{u}rich, Z\"{u}rich, Switzerland 8092}
\address{Department of Mathematics, Columbia University, New York, NY 10027}
\email{\tt connor.mooney@math.ethz.ch}
\email{\tt savin@math.columbia.edu}

% ----------------------------------------------------------------
\begin{abstract}
We study the equation $u_{11}u_{22} = 1$ in $\mathbb{R}^2$. Our results include an interior $C^2$ estimate, classical solvability of the Dirichlet problem,
and the existence of non-quadratic entire solutions. We also construct global singular solutions to the analogous equation in higher dimensions. 
At the end we state some open questions.
\end{abstract}
% ----------------------------------------------------------------

\maketitle

%%%%%%%%%%%%%%%%%%%%%%%%%%%%%%%%%%%%%%%%%%%%%%%%%%%%%%%%%%%%%%%%%%%%%%%%%%%%%%%%%%%
\section{Introduction}
 In this paper we study the equation
\begin{equation}\label{Equation}
(u_{11} u_{22})^{1/2} = 1
\end{equation}
together with its higher-dimensional versions. We assume $u: \Omega \subset \mathbb{R}^2 \rightarrow \mathbb{R}$ is continuous and convex when restricted to lines in the coordinate directions. On this class of functions the equation (\ref{Equation}) is
elliptic and concave, and the ellipticity constants may degenerate when $D^2u \rightarrow \infty$. Our interest in (\ref{Equation}) comes from the study of interior $C^2$ estimates for concave equations.

Equation (\ref{Equation}) shares several interesting features with the complex Monge-Amp\`{e}re equation. One is that
solutions are not convex. Another is that solutions have two different types of invariances: 
$$u(x_1,\,x_2)+ a \, x_1x_2 \quad \mbox{for any constant $a$, }$$ 
and 
$$u(\lambda x_1,\, \lambda^{-1}x_2) \quad \mbox{for any constant $\lambda \ne 0$}.$$ 
Notice that any Hessian $D^2u(x_0)$ can be mapped into $I$ after using these invariances. In some sense equation (\ref{Equation}) can be viewed as an interpolation between the Laplace equation and the real Monge-Amp\`{e}re equation. 

There are however some key differences between the equation we consider and the real Monge-Amp\`{e}re equation. Calabi's theorem states that solutions to the real Monge-Amp\`{e}re equation are very rigid: the only global solutions are quadratic polynomials. 
In contrast, there are nontrivial global solutions to (\ref{Equation}) which at infinity have subquadratic growth along the axes and superquadratic growth along the diagonals (see Theorem \ref{Entire}).
Another important difference concerns continuity estimates near $\partial \Omega$. The Dirichlet problem for (\ref{Equation}) is well posed if the intersection of $\Omega$ with any horizontal or vertical line is a single segment.
If we assume that $u=0$ on $\partial \Omega$ then solutions of (\ref{Equation}) do not have any uniform modulus of continuity near the boundary. On the other hand, for the real Monge-Amp\`{e}re equation, uniform H\"{o}lder estimates are a consequence of Alexandrov's estimate.

There are many important nonlinear concave equations for which it is not known whether a Pogorelov-type interior estimate holds
(that is, if $u=0$ on $\partial\Omega$, whether $D^2u(x)$ is bounded in terms of the distance from $x$ to $\partial \Omega$ and the diameter of $\Omega$). Equation (\ref{Equation}) can be viewed as a simplified model for such equations.
In this work we establish a pure interior $C^2$ estimate in 2D (see Theorem \ref{C2Estimate}). We plan to investigate the higher dimensional case in future work, and this could provide some insight into other similar equations.

We now state our results. The first is an interior a priori $C^2$ estimate in 2D.
\begin{thm}\label{C2Estimate}
Assume that $u \in C^4(B_1)$ solves (\ref{Equation}) in $B_1 \subset \mathbb{R}^2$. Then
$$\|u\|_{C^2(B_{1/2})} < C(\|u\|_{L^{\infty}(B_1)}).$$
\end{thm}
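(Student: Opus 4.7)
The plan is a direct Pogorelov-type interior estimate based on the linearization of the concave form $F(D^2u) := \log u_{11} + \log u_{22} = 0$. The linearized operator is
\[
L \;=\; F_{ij}\partial_{ij} \;=\; u_{22}\partial_{11} + u_{11}\partial_{22}.
\]
Differentiating $u_{11}u_{22}=1$ in $x_k$ yields $L(u_k)=0$, so $u_1,u_2$ are $L$-harmonic, and one further differentiation produces the two key identities
\[
L(u_k^2) \;=\; 2u_{kk}(1+u_{12}^{2}), \qquad L(\log u_{11}) \;=\; \frac{u_{22}}{u_{11}^{2}}\,u_{111}^{2} \;-\; \frac{1}{u_{11}}\,u_{112}^{2}.
\]
The first identity is the only source of a positive coercive term and will be the engine of the estimate; the negative piece in the second identity will be controlled at a maximum point via the first-order condition.

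As a preliminary step, the convexity of $u$ along each horizontal and vertical segment, together with the bound $|u|\le M$ in $B_1$, immediately yields $|\nabla u|\le C(M)$ on $B_{3/4}$.

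Next I would bound $u_{11}$ on $B_{1/2}$ via the auxiliary function
\[
W \;=\; \eta^{\gamma}\,u_{11}\,\exp\!\bigl(\tfrac{\tau}{2}u_{1}^{2}\bigr),
\]
where $\eta$ is a standard cutoff with $\eta\equiv 1$ on $B_{1/2}$ and $\operatorname{supp}\eta\subset B_{3/4}$, and the constants $\gamma,\tau>0$ are to be chosen. At an interior maximum $x_0$ of $\log W$, the condition $\nabla\log W(x_0)=0$ expresses $u_{111}/u_{11}$ and $u_{112}/u_{11}$ in terms of $\gamma\eta_i/\eta$ and $\tau u_1 u_{1i}$. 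Substituting into $L(\log W)\le 0$ and using $L(u_1^2/2)=u_{11}(1+u_{12}^{2})$ produces a schematic inequality of the form
\[
\tau\, u_{11}(1+u_{12}^{2}) \;\le\; C\Bigl(\tfrac{\gamma}{\eta}+\tfrac{\gamma^{2}}{\eta^{2}}\Bigr)u_{11} \;+\; C\,\tau^{2} u_{11} u_{12}^{2} \;+\; \text{(l.o.t.)}.
\]
Choosing $\tau$ small (depending on the gradient bound) absorbs the bad $\tau^2$ term into the good term $\tau u_{11}u_{12}^{2}$, and then a choice of $\gamma$ yields $\eta^{p}u_{11}(x_0)\le C$, which by the maximum property of $W$ transfers to $u_{11}\le C$ on $B_{1/2}$. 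By the $x_1\leftrightarrow x_2$ symmetry the identical argument bounds $u_{22}$ on $B_{1/2}$, and then the equation $u_{11}u_{22}=1$ supplies the matching lower bounds $u_{11},u_{22}\ge c>0$.

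The principal remaining obstacle is the bound on $u_{12}$: since $u_{12}$ does not appear in the equation, it is not directly controlled by the Pogorelov computation above, and one of the invariances of the equation is $u\mapsto u+a\,x_1x_2$, so a bound on $u_{12}$ must ultimately be extracted from the $L^\infty$-bound on $u$. The plan is to exploit the fact that once $u_{11},u_{22}\in[c,C]$ the linearization $L$ is uniformly elliptic and $F$ is concave, and to run a second Pogorelov-type argument on a quantity that does see $u_{12}$---for example $|D^2u|^{2}=u_{11}^{2}+u_{22}^{2}+2u_{12}^{2}$ or the largest eigenvalue of $D^2u$---using the gradient bound and the already-established bounds on $u_{11},u_{22}$ to close the estimate. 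I expect this step to be the principal technical difficulty of the proof.
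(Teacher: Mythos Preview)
Your Pogorelov computation with a \emph{standard} spatial cutoff does not close. Carrying out the calculation carefully, at the maximum of $\log W=\gamma\log\eta+\log u_{11}+\tfrac{\tau}{2}u_1^2$ one obtains (after using the first-order condition to replace $u_{112}^2/u_{11}^2$)
\[
\tau\,u_{11}\bigl(1+(1-2\tau u_1^2)u_{12}^2\bigr)\ \le\ (2\gamma^2+\gamma)\,\frac{\eta_2^2}{\eta^2}\,u_{11}\ +\ \gamma\,\frac{\eta_1^2}{\eta^2}\,u_{22}\ -\ \gamma\,\frac{L(\eta)}{\eta}.
\]
For a cutoff depending only on $x$, the term $\dfrac{\eta_2^2}{\eta^2}u_{11}$ (and likewise the piece $\dfrac{u_{11}\eta_{22}}{\eta}$ inside $L(\eta)/\eta$) has exactly the same order in $u_{11}$ as the good term $\tau u_{11}$, but with a coefficient that is bounded below while $\tau$ must be taken small to absorb the $\tau^2u_1^2u_{12}^2$ contribution. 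No choice of $\gamma$ fixes this: the bad cutoff term scales like $\gamma^2$ and the good term is independent of $\gamma$. After multiplying through by $\eta^2$ you are left with $\tfrac{\tau}{2}\eta^2u_{11}\le C\gamma^2u_{11}+\dots$, which gives no bound on $\eta^pu_{11}$. (This is the same reason Pogorelov's original estimate for $\det D^2u=1$ uses $\eta=-u$ rather than a generic cutoff.)

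The paper's resolution is to take a cutoff adapted to the equation,
\[
\eta=1-\tfrac{A}{2}\bigl(x_1^2+u_2^2\bigr),
\]
so that $\eta_2=-A\,u_2u_{22}$ carries a hidden factor of $u_{22}=u_{11}^{-1}$. Then $\eta_2^2u_{11}^2=A^2u_2^2$ is bounded by the gradient estimate, and $L(\eta)$ simplifies (via $L(u_2)=0$) to $-A(2+u_{12}^2)/u_{11}$, which is harmless. This is the main new idea; the paper motivates it by the partial Legendre transform $(x_1,x_2)\mapsto(x_1,u_2)$. For this choice to be a legitimate cutoff one must know that the component of $\{\eta>0\}$ containing the origin is compactly contained in the domain, i.e.\ that $u_2$ cannot stay small on a long vertical segment. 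That is a separate lemma (strict coordinate convexity with a quantitative lower bound depending only on $\|u\|_{L^\infty}$), proved in the paper by a barrier argument, and it has no counterpart in your outline.

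By contrast, the step you flag as the principal difficulty---controlling $u_{12}$---is in fact routine once $u_{11},u_{22}\in[c,C]$: the equation is then uniformly elliptic and concave, so either $2$D theory ($L(u_e)=0$ has $C^{1,\alpha}$ estimates) or Evans--Krylov gives the full $C^{2,\alpha}$ bound. The paper dispatches this in a remark.
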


\noindent The proof of Theorem \ref{C2Estimate} is a maximum principle argument for a second-order quantity. We introduce a cutoff function motivated
by the partial Legendre transform, which takes the Monge-Amp\`{e}re equation to the Laplace equation in $2D$. As a corollary we obtain a Liouville theorem for solutions with quadratic growth.
\begin{cor}\label{Liouville}
Assume that $u$ is a smooth solution to (\ref{Equation}) on $\mathbb{R}^2$. If in addition $|u| < C(1 + |x|^2)$ for some constant $C > 0$, then $u$ is a quadratic polynomial.
\end{cor}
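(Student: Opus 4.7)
The plan is to combine Theorem~\ref{C2Estimate} with a scaling argument and a Liouville-type theorem for the linearization.

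First, for $R \ge 1$ let $u_R(x) := R^{-2} u(Rx)$. Then $(u_R)_{ij}(x)=u_{ij}(Rx)$, so $u_R$ again solves~(\ref{Equation}), and the quadratic growth hypothesis gives $\|u_R\|_{L^\infty(B_1)} \le C(R^{-2}+1) \le 2C$. Theorem~\ref{C2Estimate} applied to $u_R$ yields $\|D^2 u_R\|_{L^\infty(B_{1/2})} \le C'$ with $C'$ independent of $R$. Undoing the rescaling gives $\|D^2 u\|_{L^\infty(\mathbb{R}^2)} \le C'$; together with $u_{11},u_{22}>0$ and $u_{11}u_{22}=1$, this confines both $u_{11}$ and $u_{22}$ to $[1/C',C']$ on $\mathbb{R}^2$, so the operator $L := u_{22}\partial_{11}+u_{11}\partial_{22}$ is uniformly elliptic.

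Differentiating $u_{11}u_{22}=1$ once in $x_1$ gives $u_{111}u_{22}+u_{11}u_{221}=0$, hence $u_{111}u_{221}=-u_{11}u_{221}^2/u_{22}\le 0$. Differentiating once more in $x_1$ yields
\[
L(u_{11})=u_{22}u_{1111}+u_{11}u_{1122}=-2\,u_{111}u_{221}\;\ge\;0,
\]
so $u_{11}$ is a bounded $L$-subsolution on $\mathbb{R}^2$. By the classical Liouville theorem for uniformly elliptic operators in two dimensions (nonnegative supersolutions on $\mathbb{R}^2$ are constant, via Krylov--Safonov together with the recurrence of 2D diffusions), applied to $\sup u_{11}-u_{11}$, the function $u_{11}$ must be constant. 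Then $u_{22}=1/u_{11}$ is also constant, and the commutativity relations $u_{12,1}=u_{11,2}=0$ and $u_{12,2}=u_{22,1}=0$ force $u_{12}$ to be constant as well. Hence $D^2 u$ is constant and $u$ is a quadratic polynomial.

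The substantive ingredient is Theorem~\ref{C2Estimate}; the remainder is a routine scaling and maximum principle argument. The two-dimensional hypothesis enters essentially in the Liouville step, which fails for general uniformly elliptic operators in $\mathbb{R}^n$ with $n\ge 3$; if one preferred to avoid invoking the Liouville theorem for subsolutions, one could instead pass to a blow-down limit $u_\infty(x):=\lim R_k^{-2}u(R_k x)$, use the $2$-homogeneity of $u_\infty$ and the global $C^2$ bound to show $u_\infty$ is a quadratic polynomial $Q$, and then conclude $u_{11}\equiv Q_{11}$ by the standard maximum principle for subsolutions tending to a constant at infinity.
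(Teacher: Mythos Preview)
Your argument is correct, but it takes a longer route than the paper's. The paper simply observes that the rescalings $u_R(x)=R^{-2}u(Rx)$ have uniformly bounded $L^\infty$ norm on $B_1$, then applies Theorem~\ref{C2Estimate} together with Remark~\ref{HigherDerivatives} to obtain a uniform $C^3$ (in fact $C^k$) bound on $B_{1/2}$; since $D^3u_R(x)=R\,D^3u(Rx)$, this forces $D^3u\equiv 0$ directly. Your version stops at the $C^2$ estimate and instead exploits the subsolution inequality $L(u_{11})\ge 0$ together with a Liouville theorem for bounded subsolutions of uniformly elliptic operators on $\mathbb{R}^2$. That Liouville statement is true (it is equivalent to recurrence of the associated diffusion, or can be proved via a three-circles argument), but it is a less standard ingredient than the Evans--Krylov/Schauder step already packaged in Remark~\ref{HigherDerivatives}. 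In particular, since Remark~\ref{HigherDerivatives} is available anyway, the paper's scaling-to-$C^3$ argument is both shorter and uses the two-dimensionality only once (in Theorem~\ref{C2Estimate}), whereas your approach invokes it a second time in the Liouville step. Your closing remark that ``the two-dimensional hypothesis enters essentially in the Liouville step'' is therefore slightly misleading: the dimension restriction is already fully encoded in Theorem~\ref{C2Estimate}, and the remainder of the paper's argument is dimension-independent.
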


Our second result is the classical solvability of the Dirichlet problem. We say that a continuous function $w$ is coordinate-convex on $\mathbb{R}^n$ if $w$ is convex when 
restricted to lines in the coordinate directions. We say that a smooth function $w$ is uniformly coordinate-convex if $w_{ii} \geq c > 0$ on $\mathbb{R}^n$ for each
$i = 1,\,...,\,n$. Finally, we say that a domain $\Omega \subset \mathbb{R}^n$ is uniformly coordinate-convex if $\Omega$ is a connected component
of $\{w < 0\}$ for some smooth, uniformly coordinate-convex function $w$. We show:
\begin{thm}\label{DP}
Let $\Omega \subset \mathbb{R}^2$ be a bounded, uniformly coordinate-convex domain. Let $\varphi \in C^{\infty}(\mathbb{R}^2)$. Then there exists a unique coordinate-convex solution 
in $C^{\infty}(\Omega) \cap C(\overline{\Omega})$ to
$$(u_{11}u_{22})^{1/2} = 1 \text{ in } \Omega, \quad u|_{\partial \Omega} = \varphi.$$
\end{thm}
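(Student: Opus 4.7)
\emph{Uniqueness.} The operator $F(M)=(m_{11}m_{22})^{1/2}$ is elliptic and concave on the cone $\{m_{11},m_{22}>0\}$. Hence if $u$ and $\tilde u$ are two coordinate-convex classical solutions with $u\le \tilde u$ on $\partial\Omega$, writing $F(D^2\tilde u)-F(D^2u)$ as an integrated linearization along the segment $[D^2u,D^2\tilde u]$ gives a linear elliptic inequality for $\tilde u-u$ that forces $u\le \tilde u$ in $\Omega$. My plan for existence is to approximate $\Omega$ from the inside by the smooth strictly coordinate-convex subdomains $\Omega_\delta:=\{w<-\delta\}$, solve there, and pass to the limit using the interior $C^2$ estimate (Theorem~\ref{C2Estimate}) together with pointwise barriers.

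\emph{Existence on $\Omega_\delta$.} Since $\partial\Omega_\delta$ is smooth and strictly coordinate-convex, the Dirichlet problem on $\Omega_\delta$ can be solved by a standard continuity argument, for instance by deforming $F(D^2u)=(1-t)F(D^2u_0)+t$ for a smooth strictly coordinate-convex starter $u_0$. Openness follows from the implicit function theorem, since the linearization is uniformly elliptic along solutions with bounded Hessian. Closedness requires a priori $C^{2,\alpha}$ bounds: the interior part is exactly Theorem~\ref{C2Estimate}, while boundary $C^{2,\alpha}$ bounds follow from Krylov's theory for concave uniformly elliptic equations on smooth domains (the subsolution barrier $\underline v$ below provides the required admissible lower envelope). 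Evans--Krylov plus Schauder bootstrap then produce $u_\delta\in C^\infty(\overline{\Omega_\delta})$.

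\emph{Barriers and boundary equicontinuity.} Fix $K>\|\varphi\|_{C^2(\overline\Omega)}/c$ and set $\underline v:=\varphi+Kw$ and $\bar v:=\varphi-Kw$. Then $\underline v_{ii}\ge Kc/2$ makes $\underline v$ a strict classical subsolution of (\ref{Equation}), while $\bar v_{ii}\le -Kc/2<0$ prevents any interior maximum of $u_\delta-\bar v$ (at such a maximum one would have $D^2u_\delta\le D^2\bar v$ with a negative diagonal entry, contradicting the coordinate-convexity of $u_\delta$). On $\partial\Omega_\delta=\{w=-\delta\}$ one has $\underline v=\varphi-K\delta<\varphi$ and $\bar v=\varphi+K\delta>\varphi$, so comparison on $\Omega_\delta$ yields
\[
\varphi+Kw-K\delta\,\le\,u_\delta\,\le\,\varphi-Kw+K\delta\quad\text{in }\Omega_\delta,
\]
which gives equicontinuity up to $\partial\Omega$ uniform in $\delta$.

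\emph{Limit and main obstacle.} Theorem~\ref{C2Estimate} combined with Evans--Krylov and interior Schauder provides $\delta$-independent $C^k_{\mathrm{loc}}(\Omega)$ bounds, so a subsequence of $\{u_\delta\}$ converges in $C^\infty_{\mathrm{loc}}(\Omega)$ and uniformly on $\overline\Omega$ to the desired coordinate-convex $u\in C^\infty(\Omega)\cap C(\overline\Omega)$ with $u|_{\partial\Omega}=\varphi$. The principal obstacle in this program is closedness of the continuity step: without the interior $C^2$ estimate the ellipticity of $F$ would degenerate as $|D^2u|\to\infty$ and the method would fail as $t\to 1$. The barrier construction, by contrast, is elementary once the uniform coordinate-convexity of $\Omega$ is exploited.
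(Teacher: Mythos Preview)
Your overall architecture (uniqueness via comparison, barriers from $\varphi\pm Kw$ for boundary equicontinuity, and the interior $C^2$ estimate to pass to the limit on compact subsets) is sound. The genuine gap is the sentence ``boundary $C^{2,\alpha}$ bounds follow from Krylov's theory for concave uniformly elliptic equations on smooth domains.'' Krylov's boundary theory presupposes uniform ellipticity, which for this equation means a priori two-sided bounds on $u_{11}$ and $u_{22}$ up to $\partial\Omega_\delta$. Those are exactly the boundary $C^2$ estimates in question, so the appeal to Krylov is circular. The paper in fact singles this out as the obstruction (see Remark~\ref{GlobalC2}): even on a disk with smooth data, it is not clear how to control $u_{12}$ at boundary points with a horizontal or vertical tangent, and only degenerate bounds like $|u_{12}|\le Cx_1^{-1}$ are obtained. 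Without a global $C^2$ bound on $\overline{\Omega_\delta}$, the closedness step of your continuity method does not go through, so you never produce the approximating solutions $u_\delta$ to take a limit of.

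The paper's proof avoids boundary estimates entirely by the Lions penalization: one solves on all of $\mathbb{R}^2$ the modified equation
\[
\prod_{i=1}^2\bigl(u^\epsilon_{ii}-\epsilon^{-1}(u^\epsilon-\varphi)\rho\bigr)=1,
\]
where $\rho$ vanishes on $\overline{\Omega}$ and is positive outside. The zeroth-order penalty term makes the equation proper, so global $C^0$, $C^1$, and $C^2$ estimates follow by maximum-principle arguments with no boundary to worry about (Proposition~\ref{GlobalApproximateProblem}); the method of continuity then gives $u^\epsilon\in C^\infty_b(\mathbb{R}^2)$. As $\epsilon\to0$ one shows $u^\epsilon\to\varphi$ uniformly on $\mathbb{R}^2\setminus\Omega$, and inside $\Omega$ the $u^\epsilon$ solve the original equation, so Theorem~\ref{C2Estimate} gives $\epsilon$-independent interior estimates and the limit is the desired solution. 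If you want to rescue your scheme, you would need either a genuine boundary $C^2$ estimate for (\ref{Equation}) on smooth coordinate-convex domains (open, per the paper), or some other device to manufacture smooth approximants without solving a boundary value problem.
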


\begin{rem}
 The domain $\Omega$ need not be smooth. Consider for example a connected component of $\{|x|^4 + |x|^2 - 4x_1x_2 < 0\} \subset \mathbb{R}^2$.
\end{rem}

\noindent As a consequence of Theorem \ref{C2Estimate} and Theorem \ref{DP} we obtain local $C^{\infty}$ regularity and derivative 
estimates of all orders for viscosity solutions of (\ref{Equation}).
\begin{cor}\label{LocalReg}
 Let $u \in C(\overline{B_1})$ be a viscosity solution to (\ref{Equation}) in $B_1 \subset \mathbb{R}^2$. Then $u$ is in fact smooth, and we have
 $$\|u\|_{C^k(B_{1/2})} \leq C(k,\,\|u\|_{L^{\infty}(B_1)}).$$
\end{cor}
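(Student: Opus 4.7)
The plan is to approximate $u$ by smooth Dirichlet solutions provided by Theorem~\ref{DP}, apply the interior estimate from Theorem~\ref{C2Estimate} to the approximants, and then bootstrap with Evans--Krylov and Schauder.

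First I would fix $r \in (1/2, 1)$. The ball $B_r$ is uniformly coordinate-convex (the function $w(x) = |x|^2 - r^2$ has $w_{ii} = 2$), so Theorem~\ref{DP} applies. Mollify the continuous data $u|_{\partial B_r}$ to obtain a sequence $\varphi_\varepsilon \in C^\infty(\mathbb{R}^2)$ with $\varphi_\varepsilon \to u|_{\partial B_r}$ uniformly, and let $u_\varepsilon \in C^\infty(B_r) \cap C(\overline{B_r})$ be the coordinate-convex solutions of (\ref{Equation}) in $B_r$ with $u_\varepsilon|_{\partial B_r} = \varphi_\varepsilon$. A comparison principle for coordinate-convex viscosity sub/supersolutions of (\ref{Equation})---valid because the equation is proper, degenerate elliptic and concave on the coordinate-convex cone---gives
$$\|u_\varepsilon - u\|_{L^\infty(\overline{B_r})} \le \|\varphi_\varepsilon - u\|_{L^\infty(\partial B_r)} \longrightarrow 0.$$
Hence $u_\varepsilon \to u$ uniformly on $\overline{B_r}$ and $\|u_\varepsilon\|_{L^\infty(B_r)}$ is bounded in terms of $\|u\|_{L^\infty(B_1)}$ independently of $\varepsilon$.

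Next, applying Theorem~\ref{C2Estimate} to each $u_\varepsilon$ (after a translation and dilation sending $B_r$ to $B_1$) yields a uniform bound $\|u_\varepsilon\|_{C^2(\overline{B_{1/2}})} \le C(\|u\|_{L^\infty(B_1)})$. Since $(u_\varepsilon)_{11}$ and $(u_\varepsilon)_{22}$ are therefore bounded from above, the relation $(u_\varepsilon)_{11}(u_\varepsilon)_{22} = 1$ forces them to be bounded below away from zero. Consequently the equation is uniformly elliptic and concave in $u_\varepsilon$ on $B_{1/2}$, with constants depending only on $\|u\|_{L^\infty(B_1)}$. Evans--Krylov then supplies a uniform interior $C^{2,\alpha}$ estimate, and differentiating the linearized equations satisfied by partial derivatives of $u_\varepsilon$ and iterating standard Schauder theory produces uniform $C^k$ bounds on $B_{1/2}$ for every $k$.

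To conclude, an Arzel\`a--Ascoli argument lets me extract a subsequence converging in every $C^{k-1}(\overline{B_{1/2}})$; the identification of the uniform limit forces this limit to equal $u$. Hence $u \in C^\infty(B_{1/2})$ with the stated estimates. The principal technical obstacle is the comparison principle needed to identify the uniform limit of $u_\varepsilon$ as $u$; once this is in hand, the remainder is a routine combination of Theorems~\ref{DP} and~\ref{C2Estimate} with the regularity theory for uniformly elliptic concave equations.
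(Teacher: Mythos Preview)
Your proposal is correct and follows essentially the same route as the paper, which records in one line that the corollary follows from Theorems~\ref{C2Estimate} and~\ref{DP} by approximating the boundary data with smooth functions, solving the Dirichlet problem, and passing to the limit. One small point: since $u \in C(\overline{B_1})$ you may as well take $r=1$ rather than $r\in(1/2,1)$, so that the rescaled interior estimate lands on $B_{1/2}$ rather than on $B_{r/2}\subsetneq B_{1/2}$.
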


\begin{rem}
The Dirichlet problem for (\ref{Equation}), and its higher dimensional versions, is uniquely solvable in the class of viscosity solutions e.g. when
$\Omega$ is uniformly convex and the boundary data $\varphi$ are smooth. This follows from general theory (see \cite{I}).
\end{rem}

To obtain solutions to (\ref{Equation}) that are smooth up to $\partial\Omega$, it suffices to obtain boundary $C^2$ estimates.
Caffarelli, Nirenberg and Spruck accomplished this for a large class of Hessian equations in \cite{CNS1}, \cite{CNS2}.
We can hope that the following Caffarelli-Nirenberg-Spruck type result holds:
{\it Let $\Omega$ be a uniformly coordinate-convex, $C^3$ domain, and $\varphi \in C^3(\partial\Omega)$. Then the Dirichlet problem for (\ref{Equation}) is uniquely solvable and the solution $u$ is of class $C^{2,\alpha}(\overline{\Omega})$.}
However, the boundary $C^2$ estimate for (\ref{Equation}) seems to be tricky, even in two dimensions (see Remark \ref{GlobalC2}). 
To prove Theorem \ref{DP} we instead use Theorem \ref{C2Estimate} and an approximation method developed by Lions for the real Monge-Amp\`{e}re equation \cite{L}.

Our next theorem concerns global solutions. Results in the spirit of the Calabi theorem that global solutions to $\det D^2u = 1$
in $\mathbb{R}^n$ are quadratic polynomials are closely connected to regularity questions. 
Interestingly, (\ref{Equation}) admits non-quadratic entire solutions.
\begin{thm}\label{Entire}
There exist non-quadratic entire solutions to (\ref{Equation}).
\end{thm}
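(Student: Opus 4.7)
The plan is to construct a non-quadratic entire solution explicitly by separation of variables. I would search for $u$ of the product form
$$u(x_1,x_2) = f(x_1)\,g(x_2)$$
with $f, g > 0$. A direct computation gives $u_{11}u_{22} = (ff'')(gg'')$, so the PDE reduces to the pair of decoupled ODEs $ff'' \equiv 1$ and $gg'' \equiv 1$, and any such $u$ automatically satisfies $u_{11}=f''g > 0$ and $u_{22}=fg'' > 0$, placing it in the coordinate-convex class on which equation (\ref{Equation}) is meaningful.

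The real task is therefore to produce a global smooth positive solution of $ff'' = 1$ on $\mathbb{R}$. Multiplying by $2f'$ and integrating gives the first integral $(f')^2 = 2\log f + C$. Normalizing $f(0) = 1$ and $f'(0) = 0$ forces $C = 0$ and $f \geq 1$, and on $\{x > 0\}$ the separated equation reads $dx = df/\sqrt{2\log f}$. Global existence on $[0, \infty)$ then reduces to the divergence of $\int_1^{\infty} dy/\sqrt{\log y}$, which is elementary; reflecting about $x=0$ produces an even, strictly convex $f \in C^{\infty}(\mathbb{R})$, and a short Laplace-type expansion of the same integral yields the asymptotics $f(x) \sim x\sqrt{2\log x}$ as $|x| \to \infty$.

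Taking $g = f$, the candidate $u(x_1, x_2) = f(x_1)\,f(x_2)$ is then smooth on $\mathbb{R}^2$, coordinate-convex, and solves (\ref{Equation}). It is non-quadratic because $f$ itself is not polynomial: if $\deg f = n \geq 2$ then $ff''$ is a polynomial of degree $2n-2$ with nonvanishing leading coefficient $n(n-1)a_n^2$, while $\deg f \leq 1$ forces $ff'' \equiv 0$, so in neither case can one have $ff'' \equiv 1$. The resulting $u$ also exhibits exactly the mixed growth advertised in the introduction: subquadratic along the axes, since $u(x_1, 0) = f(x_1) \sim x_1\sqrt{2\log x_1}$, and superquadratic along the diagonal, since $u(t, t) = f(t)^2 \sim 2t^2\log t$.

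The only step where anything could plausibly go wrong is global existence of $f$ on $\mathbb{R}$, since an ODE of the form $ff'' = 1$ could in principle blow up in finite time; the first integral reduces this potential obstacle to the elementary divergent-integral check above. Beyond that the argument is purely a quadrature, so the proof should be very short.
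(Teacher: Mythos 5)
Your construction is exactly the one in the paper: separation of variables $u=f(x_1)f(x_2)$, reduction to $ff''=1$ with $f(0)=1$, $f'(0)=0$, and solution by the first integral $(f')^2=2\log f$, giving $f=H^{-1}(\sqrt{2}\,|s|)$ with $H(t)=\int_1^t (\log x)^{-1/2}\,dx$ and the same asymptotics $f\sim \sqrt{2}\,|s|\sqrt{\log|s|}$. The argument is correct and complete.
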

\noindent By Corollary \ref{Liouville}, any such solution grows super-quadratically at $\infty$. The solution we construct grows $\sim |x|^2\log |x|$ along the diagonal lines $x_1^2 = x_2^2$.

Finally, we show that the local regularity results in two dimensions are false for the analogous problem
\begin{equation}\label{HigherD}
 \left(\Pi_{i = 1}^n u_{ii}\right)^{1/n} = 1, \quad u \text{ coordinate-convex}
\end{equation}
in higher dimensions $n \geq 3$.

\begin{thm}\label{Nonclassical}
In dimensions $n \geq 3$ there exist global, non-classical viscosity solutions to (\ref{HigherD}).
\end{thm}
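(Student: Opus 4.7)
The plan is first to reduce to dimension three and then to construct an explicit, non-classical viscosity solution in $\mathbb{R}^3$. Given such a solution $v$, the function
\[
u(x_1,\ldots,x_n) \;:=\; v(x_1,x_2,x_3) + \tfrac12\sum_{i=4}^{n} x_i^2
\]
satisfies $u_{ii}=v_{ii}$ for $i\leq 3$ and $u_{ii}=1$ for $i\geq 4$, so $\prod_{i=1}^n u_{ii}=\prod_{i=1}^3 v_{ii}$. The viscosity property transfers across the product structure: any $C^2$ test function $\varphi$ touching $u$ at $\tilde x_0$, after subtracting the separable quadratic $\tfrac12\sum_{i\geq 4}(x_i-\tilde x_i^0)^2$ and restricting to the $(x_1,x_2,x_3)$-slice, becomes a test function for $v$ at $(x_1^0,x_2^0,x_3^0)$ with the same type of one-sided contact. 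And $u$ inherits the failure of $C^2$ from $v$, so it suffices to construct $v$.

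For the 3D construction, the key point motivating the paper is that the 2D estimate of Theorem~\ref{C2Estimate} exploits the partial Legendre transform in $x_1$, which converts $u_{11}u_{22}=1$ on coordinate-convex functions into Laplace's equation, forcing smoothness. No analogous linearizing transform is available in 3D, which opens room for genuinely non-smooth viscosity solutions. I would look for an explicit $v\in C(\mathbb{R}^3)\setminus C^2(\mathbb{R}^3)$ via a Pogorelov-inspired ansatz such as
\[
v(x_1,x_2,x_3) \;=\; F(x_3)\,G(x_1,x_2)+H(x_3),
\]
tuned via compatible ODEs/PDEs (and using the scaling invariance $x_i\mapsto\lambda_i x_i$, $\prod\lambda_i=1$) so that the equation is satisfied classically off a lower-dimensional singular set $S$, with a cusp-type profile along $S$ that still preserves coordinate-convexity in the distributional sense.

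Verification then splits into (i) checking that $v$ is coordinate-convex and solves the equation classically on $\mathbb{R}^3\setminus S$, and (ii) checking the viscosity sub- and super-solution inequalities at each point of $S$. Step (ii) relies on two observations: at a point where $v$ has a convex kink in the direction normal to $S$, no $C^2$ function can touch from above, so the subsolution condition is vacuous; and any $C^2$ function touching from below at such a point has its Hessian constrained on the tangent space to $S$ by the smooth behavior of $v$ there, forcing $F\leq 1$ by the product structure of the operator.

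The main obstacle is pinning down an ansatz for which coordinate-convexity and the viscosity inequalities hold simultaneously. The naive candidate $v\propto |x_1 x_2 x_3|^{2/3}$ satisfies the equation classically off the coordinate planes but fails coordinate-convexity across them, since $|x_i|^{2/3}$ is concave near $0$; likewise, maximum-of-quadratic-solutions constructions $v=\max(Q_1,Q_2)$ yield viscosity subsolutions but, by strict concavity of $M\mapsto (M_{11}M_{22}M_{33})^{1/3}$ on positive-diagonal matrices, fail to be supersolutions---one touches from below at a common-value point by a convex combination $\alpha Q_1+(1-\alpha)Q_2$, producing $F(D^2\varphi)>1$ and violating $F\leq 1$. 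The construction must therefore balance these competing constraints, presumably by combining a cusp-type profile in one coordinate with convex behavior in the perpendicular directions.
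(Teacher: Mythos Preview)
Your outline points in the right direction—the reduction from $n$ to $3$ by adding $\tfrac12\sum_{i\geq 4}x_i^2$, and the separated ansatz $v(x_1,x_2,x_3)=h(x_3)\,w(x_1,x_2)$, are exactly what the paper uses (your extra $H(x_3)$ term only obstructs the separation; with $H\equiv 0$ the equation factors as $h^2h''=1$ and $w\,w_{11}\,w_{22}=1$). But your proposal stops precisely where the real difficulty begins: you never say what $G$ is, and you explicitly flag this as ``the main obstacle'' without resolving it. The paper's decisive idea, which is missing from your plan, is to take $w$ \emph{homogeneous of degree $4/3$}. Writing $w(x_1,x_2)=x_2^{4/3}g(x_1/x_2)$ in $\{x_2>0\}$ reduces $w\,w_{11}\,w_{22}=1$ to a second-order ODE for $g$. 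One then has to (i) prove a global even convex solution $g_1$ of this ODE exists (nontrivial, since the ODE is singular at $t=0$), (ii) exploit the rescaling $g_\lambda(t)=\lambda^{-2/3}g_1(\lambda t)$ together with the \emph{linearized} ODE to find $\lambda_0$ with $g_{\lambda_0}'(1)=\tfrac23 g_{\lambda_0}(1)$, i.e.\ $w_1(1,1)=w_2(1,1)$, and (iii) use this matching condition to extend $w$ by reflection across the diagonals to a function analytic on $\mathbb{R}^2\setminus\{0\}$. None of these steps is automatic, and without the homogeneity ansatz there is no clear path from your general discussion of ``balancing competing constraints'' to an actual solution.

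A secondary issue: your description of the singularity as a ``convex kink'' is inaccurate and would lead you to the wrong verification. The singular set is the $x_3$-axis, near which $v\sim h(x_3)\,|x'|^{4/3}$; this is $C^{1,1/3}$ with $v_{11},v_{22}\to +\infty$, not a first-order corner. That is why no $C^2$ function can touch from above (subsolution test vacuous), while for any $C^2$ $\varphi$ touching from below one has $\varphi_{33}\leq 0$ along the axis since $v\equiv 0$ there, so the supersolution inequality holds trivially once the operator is set to $-\infty$ off the coordinate-convex cone. This mechanism is specific to the $4/3$-homogeneous structure you have not yet produced.
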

\noindent Our example can be viewed as an analogue of the well-known Pogorelov example for the real Monge-Amp\`{e}re equation.
A new difficulty in our case is that (\ref{HigherD}) is not rotation-invariant. Another difference is that there are no global singular solutions to the real Monge-Amp\`{e}re equation $\det D^2u = 1$.
There are global singular solutions to the complex Monge-Amp\`{e}re equation $\det \partial\overline{\partial}u = 1$ in $\mathbb{C}^n$
for all $n \geq 2$ (see e.g. \cite{B}), in contrast with the real case.

We prove each theorem in a separate section below. We delay a standard argument in the proof of Theorem \ref{DP} to the Appendix. 
In the last section we state some interesting open problems motivated by this work.

%%%%%%%%%%%%%%%%%%%%%%%%%%%%%%%%%%%%%%%%%%%%%%%%%%%%%%%%%%%%%%%%%%%%%%%%%%%%%%%%%%%
\section{Interior $C^2$ Estimate}\label{C2EstimateSection}

In this section we prove Theorem \ref{C2Estimate}. We begin by showing that solutions
to (\ref{Equation}) are strictly convex on horizontal and vertical lines.

\begin{lem}\label{StrictConvexity}
Assume that $u$ is a viscosity solution of (\ref{Equation}) in $B_1 \subset \mathbb{R}^2$. Then
\begin{equation}\label{StrictConvexityInequality}
 u(e_2/2) + u(-e_2/2) - 2u(0) \geq \delta > 0,
\end{equation}
where $\delta$ depends only on $\|u\|_{L^{\infty}(B_1)}$.
\end{lem}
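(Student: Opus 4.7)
The plan is a contradiction by compactness followed by a viscosity test-function argument.

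After subtracting an affine function of $x_2$ (which preserves (\ref{Equation})) we may assume $u(0,\pm 1/2)=0$. The coordinate-convexity of $u(0,\cdot)$---implicit in the viscosity formulation---gives $u(0)\le 0$; set $a:=-u(0)\ge 0$. The claim becomes $a\ge c(M)/2$ for a constant $c(M)>0$.

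Suppose for contradiction that there is a sequence $(u_k)$ of viscosity solutions with $\|u_k\|_{L^\infty(B_1)}\le M$ and $a_k\to 0$. Coordinate-convexity together with the $L^\infty$ bound yields uniform interior Lipschitz estimates: restricting each $u_k$ to a horizontal or vertical segment gives a bounded convex function, hence Lipschitz on interior subsegments with constants depending only on $M$. By Arzel\`a--Ascoli and the stability of viscosity solutions under local uniform convergence, a subsequence converges locally uniformly to a viscosity solution $u_\infty$ on $B_1$ which is coordinate-convex, satisfies $\|u_\infty\|_\infty\le M$, and has $u_\infty(0,\cdot)\equiv 0$ on $[-1/2,1/2]$.

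To rule out such a degenerate $u_\infty$, I would construct a smooth test function $\psi$ that touches $u_\infty$ from above at an interior point of $B_1$ with $(\psi_{11}\psi_{22})^{1/2}<1$; this contradicts the viscosity subsolution property at that point. The natural candidates are the quadratic strict subsolutions
\[
\psi_c(x_1,x_2)=\tfrac12\beta x_1^2+\tfrac12\varepsilon x_2^2+c, \qquad \beta\varepsilon<1,
\]
and one slides $\psi_c$ downward (decreasing $c$) until it first makes contact with $u_\infty$ on some $\overline{B_\rho}\subset B_1$.

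The main obstacle is ensuring that the first contact is in the interior of $B_\rho$ rather than on its boundary. Since $u_\infty$ vanishes at the origin, the maximum of $u_\infty(x)-\tfrac12\beta x_1^2-\tfrac12\varepsilon x_2^2$ over $\overline{B_\rho}$ is at least $0$; on the other hand, on $\partial B_\rho$ this quantity is at most $M-\tfrac12\min(\beta,\varepsilon)\rho^2$. To force the maximum into the interior we need $\tfrac12\min(\beta,\varepsilon)\rho^2>M$, which together with the constraint $\beta\varepsilon<1$ can be arranged by choosing $\rho$ close to $1$ and $\beta,\varepsilon$ appropriately (asymmetrically if necessary), after exploiting the hyperbolic invariance $(x_1,x_2)\mapsto(\lambda x_1,x_2/\lambda)$ of the equation to reduce to a small-$M$ regime. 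Once the interior contact is produced, the subsolution condition supplies the contradiction, completing the argument.
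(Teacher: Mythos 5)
Your compactness reduction is exactly the paper's: subtract an affine function, extract a locally uniformly convergent subsequence using the Lipschitz bounds from coordinate-convexity, and pass to a limit $u_\infty$ (the paper's $v$) that vanishes on the segment $\{0\}\times[-1/2,1/2]$. The gap is in the second half. A quadratic test function $\psi=\tfrac12\beta x_1^2+\tfrac12\varepsilon x_2^2+c$ with $\beta\varepsilon<1$ cannot be forced to make first contact in the interior, and the two escape routes you propose both fail. First, the hyperbolic invariance $(x_1,x_2)\mapsto(\lambda x_1,x_2/\lambda)$ does not reduce $M$: it preserves the values of $u$, hence the sup norm, and merely distorts the domain; the other rescaling $r^{-2}u(rx)$ only shrinks the norm at the price of enlarging the domain beyond $B_1$. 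In fact the small-$M$ regime you need is empty: since $u_{11}u_{22}=1$ with $u_{ii}>0$ gives $\Delta u\ge 2$, any solution normalized by $u(0)=0$ satisfies $\sup_{B_1}u\ge 1$, whereas your boundary estimate requires roughly $\min(\beta,\varepsilon)>2M/\rho^2$, hence $\beta\varepsilon>4M^2/\rho^4>1$. Second, and more fundamentally, even restricting attention to a thin box around the degenerate segment does not help: $u_\infty$ is only Lipschitz, so near $x_1=0$ one only knows $u_\infty\le C|x_1|$, and a quadratic with $\beta\varepsilon<1$ cannot dominate a linear function of $|x_1|$ on the relevant boundary portion (by AM--GM one would need $\sqrt{\beta\varepsilon}\gtrsim C$). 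Note also that your scheme, as written, uses nothing about $u_\infty$ beyond $u_\infty(0)=0$, so if it worked it would contradict the existence of the solution $\tfrac12|x|^2$.

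The paper's fix is to abandon quadratic barriers and the interior-contact mechanism altogether. It compares $v$ on the rectangle $R=(0,1/4)\times(-1/2,1/2)$ with
$$g_\lambda(x_1,x_2)=\lambda\,x_1\bigl(\log(x_1^{-1})\bigr)^{1/2}(4x_2^2-1)+\lambda^{-1}x_1,$$
which is a supersolution ($0\le (g_\lambda)_{11}(g_\lambda)_{22}\le 6\lambda$) and dominates $v$ on $\partial R$: the linear term $\lambda^{-1}x_1$ beats the Lipschitz growth of $v$ on the horizontal edges, and $g_\lambda=0=v$ on the degenerate segment. The maximum principle gives $v\le g_\lambda$ in $R$, and the contradiction comes not from a contact point but from the fact that $\partial_1 g_\lambda\to-\infty$ as $x_1\to 0^+$ along $x_2=0$, which is incompatible with the coordinate-convexity of $v$ (which vanishes at the origin). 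If you want to complete your argument, you need a barrier of this non-quadratic type; the sliding-paraboloid step should be replaced by this one-sided comparison plus the convexity contradiction.
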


\begin{rem}
 This result is special to $2D$; see the example in Section \ref{SingularSolutionSection}.
\end{rem}

\begin{proof}
Assume by way of contradiction that the lemma is false. Then there exists a uniformly bounded sequence of solutions to (\ref{Equation}) that violate
(\ref{StrictConvexityInequality}) for arbitrarily small $\delta$. By coordinate-convexity, this sequence is locally uniformly Lipschitz
and thus converges locally uniformly to a bounded viscosity solution $v$ of (\ref{Equation}) in $B_1$. After subtracting
a linear function, we may assume that $v(0,\,x_2) = 0$ for $|x_2| \leq 1/2$.

Now take
$$g_{\lambda}(x_1,\,x_2) = \lambda\, (x_1 (\log(x_1^{-1}))^{1/2})\,(4x_2^2 - 1) + \lambda^{-1} x_1,$$
with $\lambda > 0$ small. A short computation gives 
$$0 \leq (g_{\lambda})_{11}(g_{\lambda})_{22} \leq 6\lambda$$
in $R := (0,\, 1/4) \times (-1/2,\, 1/2).$ In addition, for $\lambda$ small we have $g_{\lambda} \geq v$ on $\partial R$ by the local Lipschitz
regularity of $v$ (which follows from coordinate-convexity). 
By the maximum principle, $v \leq g_{\lambda}$ in $R$ for $\lambda$ small. However, $\partial_1 g_{\lambda} \rightarrow -\infty$ near $(0,\,0)$, contradicting the coordinate-convexity of $v$.
\end{proof}

We now prove Theorem \ref{C2Estimate}. Lemma \ref{StrictConvexity} is used to justify our choice of cutoff function.

\begin{proof}[{\bf Proof of Theorem \ref{C2Estimate}:}]
It suffices to show
\begin{equation}\label{SimplifiedC2}
 u_{11}(0) < C(\|u\|_{C^1(B_1)}).
\end{equation}
Indeed, the bounds $\|u_{ii}\|_{L^{\infty}(B_{3/4})} < C(\|u\|_{L^{\infty}(B_1)})$ for $i = 1,\,2$ follow by the invariance of (\ref{Equation})
under $(x_1,\,x_2) \rightarrow (x_2,\,x_1)$ and a standard covering argument.
The equation (\ref{Equation}) then becomes uniformly elliptic in $B_{3/4}$, and the full $C^2$ estimate follows by classical theory of uniformly
elliptic PDE in $2D$, or by the concavity of the equation (see Remark \ref{HigherDerivatives} below).

We may assume after subtracting a linear function that 
\begin{equation}\label{Normalize}
u(0) = u_2(0) = 0.
\end{equation}
\noindent Let
 $$Lv = \sum_{i = 1}^2 \frac{v_{ii}}{u_{ii}}$$
denote the linearized equation. By differentiating (\ref{Equation}) once we obtain
\begin{equation}\label{DiffOnce}
 L(u_k) = 0, \quad k = 1,\,2.
\end{equation}
Differentiating (\ref{Equation}) twice in the $e_1$ direction we get
\begin{equation}\label{DiffTwice}
 L(u_{11}) = \frac{1}{u_{11}^2}u_{111}^2 + u_{11}^2 u_{122}^2.
\end{equation}

Let $\eta$ be a $C^2$ function such that $\eta(0) > 0$ and the connected component $\mathcal{U}$ of $\{\eta > 0\}$ containing the origin is compactly contained in $B_{3/4}$.
(We will choose an appropriate $\eta$ later). Let
$$M := \log u_{11} + \frac{\sigma}{2}u_1^2 + \log \eta,$$
with $\sigma > 0$. Then $M$ attains its maximum in $\mathcal{U}$ at some point $x_0$. At $x_0$ we have
\begin{equation}\label{GradientZero}
0 = M_i = \frac{u_{11i}}{u_{11}} + \sigma u_1u_{1i} + \frac{\eta_i}{\eta}, \quad i = 1,\,2
\end{equation}
and
$$0 \geq L(M) = \frac{1}{u_{11}}L(u_{11}) - \frac{u_{11i}^2}{u_{11}^2u_{ii}} + \sigma u_{11}(1 + u_{12}^2) + \frac{L(\eta)}{\eta} - \frac{\eta_i^2}{\eta^2u_{ii}}.$$
Here we used the equation and its derivative (\ref{DiffOnce}). Using the twice-differentiated equation (\ref{DiffTwice}) to simplify the first two terms we obtain
$$\sigma u_{11}(1 + u_{12}^2) - \frac{u_{112}^2}{u_{11}^2}u_{11} \leq \frac{\eta_i^2}{\eta^2 u_{ii}} - \frac{L(\eta)}{\eta}.$$
Using condition (\ref{GradientZero}) we can estimate the second term from below by $-2\sigma^2 u_1^2 u_{12}^2 u_{11} - 2\frac{\eta_2^2}{\eta^2} u_{11}$. Multiplying by $\eta^2u_{11}$ we arrive at
$$\sigma \eta^2u_{11}^2(1 + (1- 2\sigma u_1^2)u_{12}^2) \leq \eta_1^2 + 3\eta_2^2u_{11}^2 - \eta L(\eta)u_{11}.$$

We now specify $\eta$. By (\ref{Normalize}) and Lemma \ref{StrictConvexity} (appropriately rescaled), we have
$u(\pm e_2/2) \geq \delta > 0$. We claim that
$$\eta := 1 - A(x_1^2 + u_2^2)/2$$
satisfies the required conditions for some $A(\|u\|_{C^1(B_1)})$. Indeed, $u(x_1,0) < C|x_1|$ and $u(x_1, \pm 1/2) > \delta - C|x_1|,$
where $C = \|u\|_{C^1(B_1)}$. Coordinate-convexity implies that $|u_2|(x_1,\,\pm 1/2) > \delta$ for $|x_1| < \delta / 4C$. 
Thus, $A(x_1^2 + u_2^2)/2 > 1$ on the boundary of $(-\delta/4C,\, \delta / 4C) \times (-1/2,\,1/2)$ for $A$ large depending on $\|u\|_{C^1(B_1)}$.

Our choice of $\eta$ gives
$$\eta_1^2 = A^2(x_1 + u_2 u_{12})^2, \quad \eta_2^2 u_{11}^2 = A^2u_2^2.$$
Using the linearized equation (\ref{DiffOnce}) we have
$$-\eta L(\eta)u_{11} = A\,\eta (2 + u_{12}^2).$$
Putting these together we obtain
$$\sigma u_{11}^2\eta^2 (1 + (1 - 2\sigma u_1^2)u_{12}^2) \leq C(\|u\|_{C^1(B_1)})(1 + u_{12}^2).$$
By choosing $\sigma$ small depending on $\sup_{B_1} |u_1|$ we obtain
\begin{equation}\label{2DPogorelov}
\eta^2u_{11}^2(x_0) < C(\|u\|_{C^1(B_1)}).
\end{equation}
We conclude that
$$u_{11}(0) \leq \eta u_{11}e^{\sigma u_1^2/2}(0) \leq \eta u_{11}e^{\sigma u_1^2/2}(x_0) \leq C(\|u\|_{C^1(B_1)}).$$
\end{proof}

\begin{rem}
 Our choice of $\eta$ is motivated by the partial Legendre transform, which takes the Monge-Amp\`{e}re equation
 to the Laplace equation in $2D$. Roughly, to estimate $u_{11}$ from above we'd like to estimate $u_{22}$ from below. This is the same as obtaining $C^2$ estimates
 for the partial Legendre transform ``taken in the $e_2$ direction.'' Since the transformed coordinates are $(p_1,\,p_2) = (x_1,\,u_2)$,
 it is natural to seek cutoff functions depending on $x_1$ and $u_2$.  
\end{rem}

\begin{rem}\label{HigherDerivatives}
 Estimates for all the higher derivatives in terms of $\|u\|_{L^{\infty}(B_1)}$ follow from uniform ellipticity and either classical $2D$ theory
 (see e.g. \cite{GT}, Chapter $17$) or the theory of concave equations (see e.g. \cite{CC}, Chapter $6$).

 More precisely, a derivative $u_e$ of $u$ solves the uniformly elliptic equation $L(u_{e}) = 0$. Such equations enjoy interior
 $C^{1,\,\alpha}$ estimates in $2D$, giving $C^{2,\,\alpha}$ estimates for $u$. Schauder theory can be used to estimate all higher derivatives.
 Alternatively, the full $C^2$ estimates can be obtained using the concavity of the equation and the weak Harnack inequality of Krylov-Safonov
 for the second derivatives $u_{ee}$ of $u$. Then $C^{2,\alpha}$ estimates follow by the Evans-Krylov theorem, and higher regularity as before.
\end{rem}

 Corollary \ref{Liouville} follows from Theorem \ref{C2Estimate} by considering the rescalings $R^{-2}u(Rx)$ and applying Remark \ref{HigherDerivatives}.
 We will remove the assumption that $u$ is $C^4$ in the next section by solving the Dirichlet problem. 

%%%%%%%%%%%%%%%%%%%%%%%%%%%%%%%%%%%%%%%%%%%%%%%%%%%%%%%%%%%%%%%%%%%%%%%%%%%%%%%%%%%
\section{The Dirichlet Problem}\label{DPSection}

In this section we prove Theorem \ref{DP}. We use an idea of Lions based on solving global approximating problems \cite{L}.

Let $C^{\infty}_b(\mathbb{R}^2)$ be the space of smooth functions $\psi$ on $\mathbb{R}^2$ satisfying $\|\psi\|_{C^k(\mathbb{R}^2)} < \infty$ for all $k$.
Assume without loss of generality that $\varphi \in C^{\infty}_0(\mathbb{R}^2)$.
Finally, let $\rho$ be a smooth nonnegative function that vanishes on $\overline{\Omega}$, is positive on $\mathbb{R}^2 \backslash \overline{\Omega}$, and 
is $1$ outside a neighborhood of $\overline{\Omega}$. (We note that such a function exists for any bounded domain $\Omega$.) The key proposition is:

\begin{prop}\label{GlobalApproximateProblem}
 For all $\epsilon > 0$, there exists a solution in $C^{\infty}_b(\mathbb{R}^2)$ of
\begin{equation}\label{ApproxEquation}
 (u^{\epsilon}_{11} - \epsilon^{-1}(u^{\epsilon} - \varphi)\rho)(u^{\epsilon}_{22} - \epsilon^{-1}(u^{\epsilon} - \varphi)\rho) = 1
\end{equation}
with $u_{ii} - \epsilon^{-1}(u^{\epsilon} - \varphi)\rho > 0$ for $i = 1,\,2$.
\end{prop}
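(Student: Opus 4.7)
The plan is to solve (\ref{ApproxEquation}) by the method of continuity, exploiting the interior estimate of Theorem \ref{C2Estimate} inside $\overline\Omega$ and the favorable structure afforded by the penalty term outside. Setting $h[u] := \epsilon^{-1}(u - \varphi)\rho$ and $\tilde u_{ii} := u_{ii} - h[u]$, equation (\ref{ApproxEquation}) reads $(\tilde u_{11}\tilde u_{22})^{1/2} = 1$ in the admissible cone $\tilde u_{ii} > 0$. Written in this form the operator is concave in $D^2u$, and $\partial_u F = -\frac{1}{2}\epsilon^{-1}\rho(\tilde u_{11}+\tilde u_{22}) \le 0$ on solutions (strictly so off $\overline\Omega$), so a comparison principle holds in the admissible class, and the linearization carries a strictly negative zero-order coefficient outside $\overline\Omega$, making it invertible on function spaces decaying at infinity.

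For the continuity argument I would take $\tau \in [0,1]$ and interpolate by replacing $\rho$ with $\rho_\tau := (1-\tau) + \tau\rho$ and $\varphi$ with $\tau\varphi$. At $\tau = 0$, the equation becomes $(u_{11} - \epsilon^{-1}u)(u_{22} - \epsilon^{-1}u) = 1$, for which the constant $u \equiv -\epsilon$ is an explicit admissible solution (one checks $\tilde u_{ii} = 1$). At $\tau = 1$ we recover (\ref{ApproxEquation}). Openness for $\tau < 1$ is the implicit function theorem in a weighted H\"older space, using the strict negativity of the zero-order coefficient to invert the linearization; closedness, including the passage to $\tau = 1$ where the penalty degenerates on $\overline\Omega$, relies on a priori $C^{2,\alpha}$ bounds uniform in $\tau$ together with the preservation of admissibility at the limit via a strict convexity argument of the Lemma \ref{StrictConvexity} type.

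The a priori estimates come in three layers. $L^\infty$ bounds follow from the comparison principle applied to barriers built from the constant $-\epsilon$ and from $\varphi \pm C$. Gradient bounds come from differentiating the equation once and running a Bernstein-type argument on $|\nabla u|^2$ under the linearized operator $L = \sum (\tilde u_{ii})^{-1}\partial_{ii}$, the inhomogeneous contributions from $\nabla\varphi$ and $\nabla\rho$ being absorbed using their smoothness and compact support. For second derivatives, inside $\overline\Omega$ the penalty vanishes and the equation reduces to (\ref{Equation}), so Theorem \ref{C2Estimate} applies directly; outside $\overline\Omega$, the strictly negative zero-order term in the linearization plays the role that the cutoff $\log\eta$ plays in the proof of Theorem \ref{C2Estimate}, allowing a parallel Pogorelov-type maximum principle argument on $\log\tilde u_{11} + \frac{\sigma}{2}u_1^2$. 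Higher-order regularity then follows from Evans-Krylov and Schauder, exactly as in Remark \ref{HigherDerivatives}.

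The main obstacle I anticipate is matching the interior estimate to the exterior estimate across the transition zone near $\partial\Omega$, where $\rho$ is small but positive and neither mechanism is by itself cleanly applicable. I expect this to be handled by running a single global Pogorelov argument with an auxiliary function $\log\tilde u_{11} + \frac{\sigma}{2}u_1^2 + \log\eta$ whose cutoff $\eta$ blends $\rho$ with a strict convexity input of the Lemma \ref{StrictConvexity} type adapted to the penalized equation, so that the competing error terms from the penalty term and from the interior estimate cancel in a way stable under the continuity deformation.
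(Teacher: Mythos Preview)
Your approach diverges from the paper's, and the matching problem you flag is a genuine gap that your sketch does not close. The paper's proof of Proposition \ref{GlobalApproximateProblem} does not use Theorem \ref{C2Estimate} at all; the estimates are allowed to depend on $\epsilon$, and they come from a single global barrier argument built on the defining function of $\Omega$. After differentiating (\ref{Model}) twice and dropping the positive quadratic term (concavity), one has $A^{ii}(u_{kk})_{ii} - \epsilon^{-1}\bigl(\sum_i A^{ii}\bigr)\rho\, u_{kk} \ge -C\sum_i A^{ii}$, and the function $-C\tilde w + K$ (with $\tilde w$ a compactly supported function agreeing with the defining function $w$ near $\overline\Omega$) is a supersolution for this inequality \emph{everywhere}: in a neighborhood of $\overline\Omega$ the uniform coordinate-convexity $\tilde w_{ii} \ge c > 0$ supplies the required sign even where $\rho = 0$, and away from $\overline\Omega$ the zero-order term with $\rho = 1$ does the work. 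Interior and exterior are thus handled by one barrier, with no splitting and no matching. The $C^0$ estimate follows the same template, and the $C^1$ estimate is immediate from the $C^0$ bound together with the semiconcavity $u_{ii} > \epsilon^{-1}(u-\varphi)\rho$, rather than a Bernstein argument.

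Your continuity path also differs: the paper keeps $\rho$ fixed and deforms the right-hand side via $\log(t + (1-t)g)$, where $g$ is manufactured so that $C_0\tilde w - K_0$ is an explicit solution at $t=0$. This keeps the penalty structure intact along the entire deformation. Your path instead degenerates the penalty as $\tau \to 1$, which forces you to prove estimates uniform in $\tau$ precisely as the zero-order term vanishes on $\overline\Omega$; your invocation of Theorem \ref{C2Estimate} inside $\overline\Omega$ only yields bounds at positive distance from $\partial\Omega$, and the exterior Pogorelov argument degenerates as $\rho \to 0$, so a neighborhood of $\partial\Omega$ is uncontrolled on both sides. The proposed blended cutoff is too vague to bridge this; the paper's insight is that the uniform coordinate-convexity of $\Omega$, encoded in $w$, is exactly the structural replacement for the penalty where $\rho$ vanishes.
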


\noindent Heuristically, the additional terms in (\ref{ApproxEquation}) ``penalize'' the solution for deviating from $\varphi$ outside $\Omega$.

\begin{proof}[{\bf Proof of Proposition \ref{GlobalApproximateProblem}}:]
 It suffices to obtain a global $C^2$ estimate for solutions in $C^{\infty}_b(\mathbb{R}^2)$ of
 \begin{equation}\label{Model}
 \sum_{i = 1}^2 \log(u_{ii} - \epsilon^{-1}(u-\varphi)\rho) = \log(f),
 \end{equation}
 where $f \in C^{\infty}_b(\mathbb{R}^2) \cap \{f \geq 1\},$ and the estimate depends only on $\epsilon$ and $\|f\|_{C^2(\mathbb{R}^n)}$.
 Here we assume $u_{ii} > \epsilon^{-1}(u-\varphi)\rho$ for $i = 1,\,2$.
 Global estimates for the higher derivatives of $u$ then follow by classical uniformly elliptic theory (see Remark \ref{HigherDerivatives}).
 The existence in $C^{\infty}_b(\mathbb{R}^2)$ of solutions to (\ref{ApproxEquation}) follows easily by the method of continuity.
 For the details of this argument, see the Appendix (Section \ref{Appendix}).
 
 Let $w$ be a defining function of $\Omega$ (that is, $\Omega$ is a connected component of $\{w < 0\}$ and $w_{ii} \geq c > 0$ on $\mathbb{R}^2$) 
 and we let $\tilde{w} \in C^{\infty}_0(\mathbb{R}^2)$ agree with $w$ in a neighborhood of $\overline{\Omega}$.
 Below, $C$ and $K$ will denote large constants depending of $\|f\|_{C^2(\mathbb{R}^2)}$ and $\epsilon$.

 {\bf $C^0$ Estimate:} Let $A^{ii} = 1/(u_{ii} - \epsilon^{-1}(u-\varphi)\rho)$. Note that
 $$A^{ii}u_{ii} - \epsilon^{-1}\left(\sum_{i = 1}^2 A^{ii}\right)\rho\,u = 2 - \epsilon^{-1}\left(\sum_{i=1}^2 A^{ii}\right)\rho\, \varphi.$$
 It is easy to check that $C\tilde{w} - h$ is a subsolution to this equation when $h \geq K$, for
 appropriate large constants $C,\, K$. (Here $h$ is constant). For some $h$, we have that $\inf_{\mathbb{R}^2} (u - (C\tilde{w} - h)) = 0$. Assume by way of contradiction
 that $h \geq K$. Then by the maximum principle, $u > -h$ outside a large ball and approaches $-h$ at a sequence of points going to $\infty$. 
 By sliding a paraboloid with Hessian $-I$ centered at a point far from the origin where $u$ is close to $-h$ from below until it touches $u$,
 we can find a point where $D^2u > -I$ and $\rho = 1$, but $u < 1-K$. This contradicts the equation for $K$ large. We conclude that $u \geq C\tilde{w} - K$.
 
 For the estimate from above, use $K - C\tilde{w}$ as a barrier and argue in the same way.

 {\bf $C^1$ Estimate:} This follows easily from the $C^0$ estimate and the semiconcavity of $u$ in coordinate directions. (Recall that
 $u_{ii} > \epsilon^{-1}(u-\varphi)\rho$ for $i = 1,\,2$.)

 {\bf $C^2$ Estimate:} We have $u_{kk} > -C$ for $k = 1,\,2$ by the $C^0$ estimate. Differentiating (\ref{Model}) twice and using the $C^1$ estimate gives
 $$A^{ii}(u_{kk})_{ii} - \epsilon^{-1}\left(\sum_{i = 1}^2 A^{ii}\right)\rho u_{kk} \geq - C \sum_{i = 1}^2 A^{ii}.$$
 (Here we dropped a positive expression that is quadratic in third derivatives on the right side. The positivity
 is a consequence of the concavity of the equation).
 Using a barrier of the form $-C\tilde{w} + K$ and arguing as in the $C^0$ estimate gives an upper bound for $u_{kk}$.
 The equation (\ref{Model}) then becomes uniformly elliptic, so the full $C^2$ bound follows from classical theory (see Remark \ref{HigherDerivatives}).
\end{proof}

We now prove Theorem \ref{DP} by taking a limit of the solutions $u^{\epsilon}$ from Proposition \ref{GlobalApproximateProblem}.

\begin{proof}[Proof of Theorem \ref{DP}]
 The proof is a refinement of the $C^0$ estimate from Proposition \ref{GlobalApproximateProblem}. We have
 $$A^{ii}u^{\epsilon}_{ii} - \epsilon^{-1}\left(\sum_{i = 1}^2 A^{ii}\right)\rho u^{\epsilon} = 2 - \epsilon^{-1}\left(\sum_{i = 1}^2 A^{ii}\right)\rho \varphi,$$
 where $A^{ii} = 1/(u^{\epsilon}_{ii} - \epsilon^{-1}(u^{\epsilon} - \varphi)\rho)$. 
 Let $\tilde{w}_{\delta} \in C^{\infty}_0(\mathbb{R}^2)$ agree with $w$ in a small neighborhood of $\overline{\Omega}$
 and satisfy $|\tilde{w}_{\delta}| < \delta$ on $\mathbb{R}^2 \backslash \Omega$. Then $C\tilde{w}_{\delta} + \varphi - 2C\delta$ is a subsolution of the above equation
 for $\epsilon$ small depending on $\delta$. By the maximum principle, $(\varphi - u^{\epsilon})^+$ converges uniformly to zero on $\mathbb{R}^2 \backslash \Omega$ 
 as $\epsilon \rightarrow 0$. (The unboundedness of the domain is not an issue, since we are working in $C^{\infty}_b(\mathbb{R}^2)$;
 argue as in the $C^0$ estimate from the proof of Proposition \ref{GlobalApproximateProblem}).

 Similarly, $-C\tilde{w}_{\delta} + \varphi + 2C\delta$ is a super-solution of this equation for $\epsilon$ small depending on $\delta$. 
 We conclude by the maximum principle that $u^{\epsilon}$ converge uniformly to $\varphi$ on $\mathbb{R}^2 \backslash \Omega$.

 Since $u^{\epsilon}$ solve (\ref{Equation}) in $\Omega$ and converge uniformly on $\partial \Omega$, we have by the maximum principle that 
 $\{u^{\epsilon}\}$ is Cauchy in $C^0(\mathbb{R}^2)$.
 The $u^{\epsilon}$ thus converge as $\epsilon \rightarrow 0$ to a continuous function on $\mathbb{R}^2$ that agrees with $\varphi$ on $\mathbb{R}^2 \backslash \Omega$.

 Finally, by Theorem \ref{C2Estimate}, in any $\Omega' \subset \subset \Omega$, we have derivative estimates of all orders
 for $u^{\epsilon}$ that are independent of $\epsilon$.
 We conclude that the limit is smooth in $\Omega$ and solves (\ref{Equation}) classically. The uniqueness follows from the maximum principle.
\end{proof}

Corollary \ref{LocalReg} follows from Theorem \ref{C2Estimate} and Theorem \ref{DP} by approximating the boundary data with smooth functions, solving the Dirichlet problem, and
taking a limit.
 
\begin{rem}
 In higher dimensions, the above techniques show that we can approximate any viscosity solution to the equation
 $$\left(\Pi_{i = 1}^n u_{ii}\right)^{\frac{1}{n}} = 1 \text{ in } \Omega \subset \mathbb{R}^n \text{ bounded, uniformly coordinate-convex}, \quad u|_{\partial \Omega} = \varphi$$
 by smooth solutions (with different boundary data). In the case $n \geq 3$ the classical solvability remains open due to the lack of an interior $C^2$ estimate 
 (which is false without e.g. hypotheses on boundary data; see the example in Section \ref{SingularSolutionSection}).
\end{rem}

\begin{rem}\label{GlobalC2}
 An interesting question is whether $C^2$ estimates hold on $\partial \Omega$, even in the simple case that $\Omega = B_1 \subset \mathbb{R}^2$ and $\varphi$ is smooth.
 It seems that the main difficulty is to estimate the mixed second derivatives at points with a horizontal or vertical tangent line.
 
 Boundary gradient and tangential second derivative estimates are standard.
 At points with a horizontal or vertical tangent line, we can also estimate the normal second derivative. 
 To see this, assume for simplicity that $\partial \Omega = \{(x_1,\,x_1^2/2)\}$ near $0$,
 and that $u(0) = 0,\, \nabla u(0) = 0$. By the equation it suffices to bound $u_{11}(0)$ from below. By subtracting a multiple of $x_1x_2$ we may assume that the cubic part in the expansion of the boundary data vanishes. 
 If $u_{11}(0) = 0$, then $u \sim x_1^4$ along $\partial\Omega$. It follows that $\{u < h\}$ contains a box $Q$ centered on the $x_2$ axis with area $\sim h^{3/4} >> h$ for $h$ small. It is easy to construct
 a convex quadratic polynomial $P$ such that $P > h$ on $\partial Q$, $P = 0$ in the center of $Q$, and $P_{11}P_{22} << 1$. By coordinate convexity, $u \geq 0$ in the center of $Q$, so this contradicts
 the maximum principle.
 
 To bound $u_{12}(0)$ it is natural to consider a tangential derivative $u_{\tau} = u_1 + x_1u_2$. 
 However, the right hand side of the linearized equation for $u_{\tau}$ is $2u_{12}/u_{11}$, which is not
 controlled by the right hand sides for the usual quantities $u_1^2$ and $|x|^2$. 
 One can instead get estimates that degenerate near $0$ by observing that the tangential derivative $x_1u_1 + 2x_2u_2$ solves the linearized equation with constant right hand side. 
 This leads to the bounds
 $$C^{-1}x_1^2 \leq u_{11} \leq C, \quad C^{-1} \leq u_{22} \leq Cx_1^{-2}, \quad |u_{12}| \leq Cx_1^{-1}$$
 on $\partial\Omega$ near $0$. It is unclear how to get bounds that extend all the way to the origin.
\end{rem}

%%%%%%%%%%%%%%%%%%%%%%%%%%%%%%%%%%%%%%%%%%%%%%%%%%%%%%%%%%%%%%%%%%%%%%%%%%%%%%%%%%

\section{Entire Solutions}\label{BernsteinSection}

In this section construct non-quadratic entire solutions to (\ref{Equation}) in $\mathbb{R}^2$. We search for solutions of the form
$$u(x_1,\,x_2) = f(x_1)f(x_2).$$
It suffices to find a global solution to the ODE
\begin{equation}\label{2DODE}
ff'' = 1, \quad f(0) = 1, \quad f'(0) = 0.
\end{equation}
It is easy to check that the solution to (\ref{2DODE}) is given by
$$f(s) = H^{-1}(\sqrt{2}|s|),$$
where
$$H(t) = \int_1^t \frac{1}{\sqrt{\log (x)}}\,dx.$$
The function $f$ is positive, convex, even and analytic, and 
$$f \sim \sqrt{2} |s| \sqrt{\log |s|}$$ 
for $s$ large. In particular, $u \sim r^2 \log r$ on the diagonal lines $x_1^2 = x_2^2$, and $u \sim r \sqrt{\log r}$ on the coordinate axes, for $r$ large.

\begin{rem}
There are also explicit solutions to (\ref{Equation}) in the box $\Omega := [-1,\,1]^2$ with $u|_{\partial \Omega} = 0$, of the form $u = g(x_1)g(x_2)$. 
Here $g < 0$ on $(-1,\,1)$ and solves $g''g = -1,\, g(\pm 1) = 0$. A direct computation gives $g = \lambda_0^{-1}G^{-1}(\lambda_0 |x|)$ for some $\lambda_0 > 0$, 
where $G(t) = \int_{-1}^t [\log (x^{-2})]^{-1/2}\,dx,\, -1 \leq t \leq 0$. Notice that $|\nabla u| \rightarrow \infty$ on $\partial\Omega$. 
We used a variant of this solution to establish strict coordinate convexity for solutions to (\ref{Equation}) in 2D (see Lemma \ref{StrictConvexity}).
\end{rem}

%%%%%%%%%%%%%%%%%%%%%%%%%%%%%%%%%%%%%%%%%%%%%%%%%%%%%%%%%%%%%%%%%%%%%%%%%%%%%%%%%
\section{Singular Solutions in Higher Dimensions}\label{SingularSolutionSection}

In this section we construct a non-classical global (viscosity) solution in $\mathbb{R}^3$ to the equation
$$u_{11}u_{22}u_{33} = 1, \quad u \text{ coordinate-convex}.$$ 
This shows that the interior regularity results for $2D$ are false in higher dimensions.

Our example is inspired by the Pogorelov example for the real Monge-Amp\`{e}re equation \cite{P}. We search for solutions of the form
$$u(x_1,\,x_2,\,x_3) = w(x_1,\,x_2)h(x_3).$$
The problem reduces to constructing solutions to
$$ww_{11}w_{22} = 1 \text{ on } \mathbb{R}^2, \quad h^{2}h'' = 1 \text{ on } \mathbb{R},$$
with $h > 0$ convex and $w \geq 0$ coordinate-convex.
The main difficulty is that the equation for $w$ is not rotation-invariant.

We first solve for $h$. The solution with initial conditions $h(0) = 1, \, h'(0) = 0$ is
$$h(s) = G^{-1}(\sqrt{2}|s|),$$
where 
$$G(t) = \int_1^{t} \left(\frac{x}{x-1}\right)^{1/2}\,dx.$$
In particular, $h$ is positive, convex, globally defined and analytic, and $h \sim \sqrt{2} |s| - \frac{1}{2}\log|s|$ for $|s|$ large.

We now construct a positive coordinate-convex solution to $ww_{11}w_{22} = 1$ in $\{x_2 > 0\}$, where $w$ is homogeneous of degree $4/3$, 
even over the $x_2$ axis, and $w_1(1,\,1) = w_2(1,\,1)$. We can extend to a global solution on $\mathbb{R}^2$ by taking reflections over the lines $x_2 = \pm x_1$. 
(The solution we construct is in fact analytic outside the origin; see Remark (\ref{Gluing})). 

Let 
$$w(x_1,\,x_2) = x_2^{4/3}g(x_2^{-1}x_1)$$ 
in $\{x_2 > 0\}$, so that $w$ is $4/3$-homogeneous and $w(t,\,1) = g(t)$. 
The equation for $w$ reduces to the ODE
\begin{equation}\label{ODE}
 g\,g''\,\left(t^2g'' - \frac{2}{3}tg' + \frac{4}{9}g\right) = 1.
\end{equation}

We first claim that there exists a global even, convex solution $g_1$ to (\ref{ODE}) with the initial conditions $g_1(0) = 1$ and $g_1'(0) = 0$. The existence
and uniqueness in a neighborhood of $0$ (say $|t| < \epsilon$) follows from the fact that 
\begin{equation}\label{Poly}
 xz(t^2z - 2/3ty + 4/9x) - 1 = 0 
\end{equation}
defines $z$ as a smooth function of $(x,\,y,\,t)$
in a neighborhood of $(x,\,y,\,z,\,t) = (1,\,0,\,9/4,\,0)$ by the implicit function theorem. The solution is even by the invariance of (\ref{ODE}) under
reflection and the initial conditions, and convex since $g_1''(0) > 0$. To complete the argument, note that for $x,\,t > 0$ the positive solution to (\ref{Poly}) 
is given by
\begin{equation}\label{Soln}
 z(x,\,y,\,t) = \frac{1}{3t^2}\left(\left(ty - \frac{2}{3}x\right) + \left(\frac{9t^2}{x} + \left(ty- \frac{2}{3}x\right)^2\right)^{1/2}\right).
\end{equation}
This function is smooth and uniformly Lipschitz in $x,\,y$ in the region $\{t \geq \epsilon\} \cap \{x \geq 1\}$. We have
$g_1 \geq 1$ on any interval of existence around $0$ by the initial conditions and convexity, which combined with the previous observation gives long-time existence.

We next observe that for $\lambda > 0$ the rescalings
$$g_{\lambda}(t) := \lambda^{-2/3}g_1(\lambda t)$$
solve (\ref{ODE}). This invariance comes from the invariance of $ww_{11}w_{22} = 1$ under $(x_1,\,x_2) \rightarrow (\lambda^{1/2} x_1,\, \lambda^{-1/2}x_2)$.
We will choose $\lambda_0 > 0$ such that $g := g_{\lambda_0}$ satisfies
$$g'(1) - 2/3g(1) = 0,$$ 
which implies that $w_1(1,\,1) = w_2(1,\,1)$. 

To that end we let 
$$h_{\lambda}(t) := tg_{\lambda}'(t) - 2/3 g_{\lambda} = \lambda^{-2/3}h_1(\lambda t).$$
It suffices to show that $h_1(\lambda_0) = 0$ for some $\lambda_0 > 0$. (Then we have $h_{\lambda_0}(1) = 0$, so letting $g = g_{\lambda_0}$ would
complete the construction).

Since 
$$h_1(t) = \frac{d}{d\lambda}g_{\lambda}|_{\lambda = 1}(t)$$ 
and $g_{\lambda}$ solve (\ref{ODE}) for all $\lambda > 0$, the function $h_1$ solves the linearized equation
\begin{equation}\label{LinearizedODE}
 \left(g_1 + t^2(g_1\,g_1'')^2\right)h_1'' - \frac{2}{3}(g_1g_1'')^2th_1' + \left(g_1'' + \frac{4}{9}(g_1g_1'')^2\right)h_1 = 0.
\end{equation}
Since $h_1(0) = -2/3$ we have that $h_1$ is convex in a neighborhood of $0$. Note that $h_1$ is even. 
The equation (\ref{LinearizedODE}) prevents $h_1'$ from becoming zero before
$h_1$ reaches $0$. We conclude from (\ref{LinearizedODE}) that $h_1$ is convex in the interval around $0$ where it is negative,
and thus crosses zero at some time $\lambda_0 > 0$. This completes the construction.

\begin{rem}\label{Gluing}
 We have in addition that 
 $$g(t) = t^{4/3}g(1/t).$$ 
 Indeed, one checks that $\tilde{g}:= t^{4/3}g(1/t)$ solves (\ref{ODE}) and satisfies
 $\tilde{g}(1) = g(1),\, \tilde{g}'(1) = g'(1)$ by the condition $g'(1) - 2/3g(1) = 0$. We conclude that 
 $$g \sim a t^{4/3} + b t^{-2/3}$$ 
 for $t$ large and for some $a,\,b > 0$.
 We also conclude that 
 $$w = |x_2|^{4/3}g(x_2^{-1}x_1) = |x_1|^{4/3}g(x_1^{-1}x_2)$$ 
 is analytic outside the origin.
\end{rem}

%%%%%%%%%%%%%%%%%%%%%%%%%%%%%%%%%%%%%%%%%%%%%%%%%%%%%%%%%%%%%%%%%%%%%%%%%%%%%%%%%%
\section{Open Problems}\label{OpenProblems}

Here we list some open problems related to our results.

\begin{enumerate}
\item Classify the global solutions to $u_{11}u_{22} = 1$ in $\mathbb{R}^2$.
\item Find conditions (e.g. on the boundary and boundary data) that guarantee an interior $C^2$ estimate for $\Pi_{i = 1}^n u_{ii} = 1$
in $\mathbb{R}^n,\, n \geq 3$.
\item Solve the classical Dirichlet problem for $\Pi_{i = 1}^n u_{ii} = 1,\, n \geq 3$, on some natural class of domains. 
(More generally, consider equations for concave symmetric functions of the $u_{ii}$).
\item Analyze the structure of the singular set for solutions to $\Pi_{i = 1}^n u_{ii} = 1$ in dimensions $n \geq 3$. For example:
Are the singularities analytic? Do they propagate to the boundary? What is the Hausdorff dimension of the singular set?
\end{enumerate}

%%%%%%%%%%%%%%%%%%%%%%%%%%%%%%%%%%%%%%%%%%%%%%%%%%%%%%%%%%%%%%%%%%%%%%%%%%%%%%%%%%
\section{Appendix}\label{Appendix}

In the appendix we describe how to obtain existence in $C^{\infty}_b(\mathbb{R}^2)$ of solutions to (\ref{ApproxEquation}) using global $C^2$ estimates for 
(\ref{Model}). 

Let $w$ and $\tilde{w}$ be as in the proof of Proposition \ref{GlobalApproximateProblem}, and let
$$g = \Pi_{i = 1}^2 ((C_0\tilde{w}-K_0)_{ii} - \epsilon^{-1}(C_0\tilde{w} - K_0 - \varphi)\rho),$$
for constants $C_0,\,K_0$ chosen large enough that $g > 1$. We would like to solve in $C^{\infty}_b(\mathbb{R}^2)$ the problems
\begin{equation}\label{MOCEquation}
 \sum_{i = 1}^2 \log(u_{ii} - \epsilon^{-1}(u - \varphi)\rho) = \log(t + (1-t)g),
\end{equation}
for all $t \in [0,\,1]$.

We first claim the the set of $t$ for which (\ref{MOCEquation}) is solvable in $C^{\infty}_b(\mathbb{R}^2)$ is closed. This follows from
the global $C^2$ estimates for (\ref{Model}). Indeed, by classical uniformly elliptic theory these imply
global $C^k$ estimates (for each $k$) for solutions in $C^{\infty}_b(\mathbb{R}^2)$ of (\ref{MOCEquation}), that are independent of $t$.

We next claim that the set of $t$ for which (\ref{MOCEquation}) is solvable in $C^{\infty}_b(\mathbb{R}^2)$ is also open. 
We will use the implicit function theorem. To that end, let $X_1$ be the open subset of $C^{2,\alpha}(\mathbb{R}^2)$ given by
$$X_1 = \{\psi \in C^{2,\,\alpha}(\mathbb{R}^2): \inf_{\mathbb{R}^2} (\psi_{ii} - \epsilon^{-1}(\psi - \varphi)\rho) > 0 \text{ for } i=1,\,2\},$$
and define $G: X_1 \times [0,\,1] \rightarrow C^{\alpha}(\mathbb{R}^2)$ by
$$G(u,\,t) = \sum_{i = 1}^2 \log(u_{ii} - \epsilon^{-1}(u-\varphi)\rho) - \log(t + (1-t)g).$$
Assume that $G(u_0,\,t_0) = 0$. It is straightforward to check that $G$ is $C^1$ on $X_1 \times [0,\,1]$, 
and that the linearization $G_u$ at $(u_0,\,t_0)$ is given by
$$G_u(u_0,\,t_0)(v) = \sum_{i = 1}^2 A^{ii}v_{ii} - \epsilon^{-1}\left(\sum_{i = 1}^2 A^{ii}\right)\rho v,$$
where $A^{ii} = 1/((u_0)_{ii} - \epsilon^{-1}(u_0-\varphi)\rho)$.

The injectivity of $G_u(u_0,\,t_0) : C^{2,\,\alpha}(\mathbb{R}^2) \rightarrow C^{\alpha}(\mathbb{R}^2)$ follows from the maximum principle.
(We remark again that there is no problem with the domain being unbounded, since we work with globally bounded quantities; one can argue as in the $C^0$ 
estimate from the proof of Proposition \ref{GlobalApproximateProblem}). For surjectivity, solve the problems 
$$G_u(u_0,\,t_0)(v_R) = f, \, v_R|_{\partial B_R} = 0$$ 
for each $R$. 
The functions $\pm (-C\tilde{w} + K)$ are super- and sub- solutions for $C,\,K$ large constants depending on $\|f\|_{L^{\infty}(\mathbb{R}^2)}$, so
by the maximum principle, $v_R$ are uniformly bounded. Schauder estimates give uniform $C^{2,\,\alpha}$ bounds for $v_R$ in $B_{R-1}$.
In the limit $R \rightarrow \infty$ we obtain a solution in $C^{2,\,\alpha}(\mathbb{R}^2)$ to $G_u(u_0,\,t_0)v = f$, with $\|v\|_{C^{2,\,\alpha}(\mathbb{R}^2)}$
controlled by $\|f\|_{C^{\alpha}(\mathbb{R}^2)}$. 

By the implicit function theorem (see e.g. Chapter $17$ in \cite{GT}), there exist solutions
in $X_1$ to $G(u,\,t) = 0$ for all $t$ close to $t_0$. Schauder theory implies that in fact $u \in C^{\infty}_b(\mathbb{R}^2)$, proving the claim.

Since $G(C_0\tilde{w} - K_0,\,0) = 0$, the set of $t$ for which (\ref{MOCEquation}) is solvable in $C^{\infty}_b(\mathbb{R}^2)$
is nonempty. We conclude that the equation (\ref{MOCEquation}) is solvable in $C^{\infty}_b(\mathbb{R}^2)$ for all $t \in [0,\,1]$.

%%%%%%%%%%%%%%%%%%%%%%%%%%%%%%%%%%%%%%%%%%%%%%%%%%%%%%%%%%%%%%%%%%%%%%%%%%%%%%%%%%%
\section*{Acknowledgments}
C. Mooney was supported by NSF grant DMS-1501152 and ERC grant ``Regularity and Stability in Partial Differential Equations" (RSPDE).

O. Savin was supported by NSF grant DMS-1500438.

%%%%%%%%%%%%%%%%%%%%%%%%%%%%%%%%%%%%%%%%%%%%%%%%%%%%%%%%%%%%%%%%%%%%%%%%%%%%%%%%%%%

%%%%%%%%%%%%%%%%%%%%%%%%%%%%%%%%%%%%%%%%%%%%%%%%%%%%%%%%%%%%%%%%%%%%%%%%%%%%%%%%%%%%

\end{document}